\newtheorem{theorem}{Theorem}[section]
\newtheorem{lemma}[theorem]{Lemma}
\newtheorem{proposition}[theorem]{Proposition}
\newtheorem{conjecture}[theorem]{Conjecture}
\newtheorem{definition}[theorem]{Definition}
\newtheorem{exa}[theorem]{Example}
\newtheorem{rem}[theorem]{Remark}
\title[Relatively acceptable notation]{Relatively acceptable notation}
\author[N.~Bazhenov]{Nikolay Bazhenov}
\address{Sobolev Institute of Mathematics, pr. Akad.
Koptyuga 4, Novosibirsk, 630090 Russia
}
\email{bazhenov@math.nsc.ru}
\author[D.~Kalociński]{Dariusz Kalociński}
\address{Institute of Computer Science, Polish Academy of Sciences, ul. Jana Kazimierza 5, 01-248 Warsaw, Poland}\email{dariusz.kalocinski@ipipan.waw.pl}
\thanks{Bazhenov is supported by the Mathematical Center in Akademgorodok under the agreement No. 075-15-2022-281 from 05.04.2022 with the Ministry of Science and Higher Education of the Russian Federation. Kalociński was supported by the National Science Centre grant no. 2018/31/B/HS1/04018.}
\keywords{computable structure theory, degree spectra, $\omega$-type order, notations, c.e. degrees, philosophy of computing}
\subjclass[2010]{03D45}
\begin{document}

\maketitle

\begin{abstract}
Shapiro's notations for natural numbers, and the associated desideratum of acceptability---the property of a notation that all recursive functions are computable in it---is well-known in philosophy of computing. Computable structure theory, however, although capable of fully reconstructing Shapiro's approach, seems to be off philosophers' radar. Based on the case study of natural numbers with standard order, we make initial steps to reconcile these two perspectives. First, we lay the elementary conceptual groundwork for the reconstruction of Shapiro's approach in terms of computable structures and show, on a few examples, how results pertinent to the former can inform our understanding of the latter. Secondly, we prove a new result, inspired by Shapiro's notion of acceptability, but also relevant for computable structure theory. The result explores the relationship between the classical notion of degree spectrum of a computable function on the structure in question---specifically, having all c.e.\ degrees as a spectrum---and our ability to compute the (image of the) successor from the (image of the) function in any computable copy of the structure. The latter property may be otherwise seen as relativized acceptability of every notation for the structure.
\end{abstract}

\section{Introduction}\label{sec:introduction}
Shapiro made a point that computations are performed on syntactic objects, such as strings of symbols, rather than on numbers themselves \cite{shapiro_acceptable_1982}. Some models of computation are directly based on this premise \cite{turing_computable_1936,markov_theory_1951} but some are not \cite{shepherdson_computability_1963,church_unsolvable_1936}. When showing equivalence between these models, one uses some form of notation for natural numbers, e.g., the unary notation. However, as Shapiro observed, not every notation is appropriate for showing the desired equivalence. This made him to ask when a notation is appropriate in the above sense, or, in his words, when it should be deemed acceptable. He showed that a notation is acceptable if and only if the successor is computable in it.

Shapiro's notations became influential in philosophy of computing (see, e.g., \cite{michael_rescorla_churchs_2007,copeland_deviant_2010,quinon_taxonomy_2018,wroclawski_michal_representations_2019,shapiro_computability_2022}). Meanwhile, we have seen rapid development of computable structure theory which explores the relationship between computability of countable objects and algebraic structures \cite{ash_computable_2000,montalban_computable_2021}. Interestingly, Shapiro's framework can be fully rephrased in terms of computable structures under the following slogan: a notation for a given computable structure corresponds to a computable isomorphic copy of the structure. This allegedly simple fact, and its consequences, has been consistently overlooked by philosophers, probably due to differing terminologies, emphasis on apparently different notions, as well as intricate technicalities of the theory itself. Anyhow, in this paper we would like to partly reconcile these two perspectives. 
We will do it in Section \ref{sec:notations}, which is intended as a gradual introduction of the necessary concepts, assuming basics of computability theory \cite{cutland}, and in Appendix \ref{sec:A}, which recasts some results of Shapiro. Our exposition, although restricted to the structure $(\omega,<)$, i.e., natural numbers with the standard order, generalizes to other computable structures. This is the first contribution of the paper which should appeal to formally inclined philosophers.

The second contribution of the paper explores the connection between two properties, namely \ref{p1} the degree spectrum of all and (only) computably enumerable (c.e.) degrees, and \ref{p2} the recoverability of the successor. 

The notion of \emph{degree spectrum} is a classical one \cite{richter_degrees_1977,richter_degrees_1981,harizanov_degree_1987}. The degree spectrum of computable $R$ on $(\omega,<)$ is the set of all Turing degrees of the images of $R$ in all computable isomorphic copies of $(\omega,<)$; the spectrum of all c.e. degrees is the degree spectrum of the successor on $(\omega,<)$ \cite{chubb_degree_2009}. On the other hand, the \emph{recoverability of the successor} is inspired by Shapiro's desideratum of acceptability of notation \cite{shapiro_acceptable_1982}. Recall that a notation is acceptable if the successor function is computable in it. We look at a relativized version of acceptability of a computable isomorphic copy $\mathcal A$ of $(\omega,<)$, namely the computability of $Succ_\mathcal{A}$ (the image of the successor in $\mathcal A$) relative to $R_\mathcal{A}$ (the image of $R$ in $\mathcal A$). We say that the successor is recoverable from $R$ on $(\omega,<)$ if $Succ_\mathcal{A}$ is computable from $R_\mathcal{A}$ in any computable isomorphic copy $\mathcal A$ of $(\omega,<)$. 

It is known that the recoverability of the successor from $R$ on $(\omega,<)$ fixes $R$'s spectrum to that of all c.e. degrees (see, e.g., \cite{bazhenov_intrinsic_2022}).
On the other hand, for any known example of a computable relation $R$ having this degree spectrum, the successor is recoverable from $R$. We ask whether this holds in general, namely whether for any computable relation $R$, if $R$'s degree spectrum on $(\omega,<)$ is equal to all c.e. degrees then the successor is recoverable from $R$ on $(\omega,<)$. Following \cite{bazhenov_intrinsic_2022}, we attack this problem for a restricted but nevertheless inclusive class of computable relations, namely the graphs of unary total computable functions. We expand on techniques developed there, in particular by reusing the concept of block functions. Our result---which answers the aforementioned question in the affirmative---is proved for computable block functions satisfying certain intuitive effectiveness condition (Theorem~\ref{theorem:non-c.e.}). The case of computable block functions that do not satisfy it is left open.

\section{Notations and computable structures}\label{sec:notations}

Originally, Shapiro considered notations for natural numbers (henceforth, $\omega$) with no additional structure. A \emph{notation} for $\omega$ is any bijection $\sigma: S \to \omega$, where $S$ is a fixed countably infinite set of strings over a finite alphabet. The idea is that $\alpha \in S$ is a numeral denoting $\sigma(\alpha)$ and we think of computations as being performed only on numerals. Computability of an $n$-ary relation $R$ \emph{on natural numbers} in $\sigma$ is equated, by definition, with the computability of the $\sigma$-preimage of $R$, $\{(\sigma^{-1}(a_1), \sigma^{-1}(a_2), \ldots, \sigma^{-1}(a_n)): (a_1,a_2,\ldots,a_n) \in R\}$, which is a relation \emph{on numerals}. 

Observe that, without loss of generality, we can effectively represent any infinite computable $S \subseteq \Sigma^*$ as $\omega$. Such identification is commonplace is computability theory. Essentially, it boils down to two facts. One is that, for any finite $\Sigma$, there exists a computable bijection $c: \Sigma^* \to \omega$. Since $S$ is computable, it is coded, as we say, by the computable set $c(S)$ (the image of $S$ under $c$). Second is that for every computable infinite set $A \subseteq \omega$ there exists a strictly increasing computable enumeration of $A$, i.e., a total computable function $g$ such that the image of $g$ is $A$ and $g(n) < g(n+1)$, for all $n \in \omega$. Given these two facts, the effective representation of $S$ as $\omega$ is established as follows. Take a strictly increasing computable enumeration of $c(S)$, say $n_0 < n_1 < n_2 < \ldots$, and substitute each $n_i$ by $c^{-1}(n_i)$ (note that $c^{-1}$ is a computable function). This yields a computable sequence $s_0, s_1, s_2, \ldots$ which consists of all (and only) elements of $S$, without repetitions. We take such identification for granted and thus we can assume that a notation for $\omega$ is any bijection from $\omega$ to $\omega$.

Shapiro considered only notations for plain $\omega$ (in Appendix \ref{sec:A}, we reproduce some of his results in the setting of computable structure theory). However, one can easily extend his notion to cover additional structure. We shall focus on the additional structure in the form of the simplest ordering possible---the standard order on natural numbers, which we denote by $<$. Hence, the structure under investigation is $(\omega,<)$. By a notation for $(\omega,<)$ we shall mean any notation $\sigma$ for $\omega$ in which $<$ is computable.

Now, let us turn our attention to computable structures.
A countably infinite relational structure $(A,R_1,\dots,R_n)$ is said to be \emph{computable} if its universe $A$ and each relation $R_i$ are computable. Without loss of generality, we can assume that $A = \omega$, the reason being similar to the one already discussed for $S$. Clearly, $(\omega,<)$ is a computable structure. 

To pinpoint the connection between notations and computable structures, consider the following definition.
\begin{definition}\label{df_copy}
$(\omega, \prec)$ is a computable copy of $(\omega,<)$ if $\prec$ is a computable ordering on $\omega$ and structures $(\omega, <)$ and $(\omega, \prec)$ are isomorphic.
\end{definition}

Now, we can make the following observations. Let $\sigma: \omega \to \omega$ be a notation for $(\omega,<)$ and let $\prec$ be the $\sigma$-preimage of $<$, i.e., $\prec := \{(\sigma^{-1}(x),\sigma^{-1}(y)): x < y\}$. By the definition of notation, $\prec$ is computable. Moreover, by the definition of $\prec$, the structures $(\omega,<)$ and $(\omega,\prec)$ are isomorphic. Therefore, by Definition \ref{df_copy}, $(\omega,\prec)$ is a computable copy of $(\omega,<)$.

The next observation goes in the other direction. Let $(\omega,\prec)$ be a computable copy of $(\omega,<)$. Let $h:(\omega,<) \cong (\omega,\prec)$ be an isomorphism between the two structures and let $\sigma = h^{-1}$. Obviously, $\sigma$ is a notation for $\omega$. Also, the $\sigma$-preimage of $<$ is equal to $\prec$ and, by Definition \ref{df_copy}, $\prec$ is computable. Therefore, $\sigma$ is a notation for $(\omega,<)$.

Based on the above two observations, we can posit that a notation for $(\omega,<)$ is any isomorphism that maps a computable structure $(\omega,\prec)$ to $(\omega,<)$. 
 Therefore, instead of notations for $(\omega,<)$ we may equivalently speak about computable copies of $(\omega,<)$. This generalizes straightforwardly to arbitrary computable structures.
 
Let us recall one of the concepts introduced by Shapiro, namely acceptability of notation, and see what does it mean in terms of computable structures. A notation for $\omega$ is said to be \emph{acceptable} if the successor function (henceforth, $Succ$) is computable in it (this implies that all recursive functions are). Acceptability of notation for plain $\omega$ can be extended, in an obvious way, to notations for $\omega$ with additional structure, in particular $(\omega,<)$. 

 What does the desideratum of notation's acceptability mean from the perspective of computable structures? Before we answer this, consider the following convention which is commonplace when referring to isomorphic copies of a given structure.
\begin{definition}
Let $R$ be a relation on $(\omega,<)$, i.e., $R \subseteq \omega^k$, for some $k \in \omega$, and let $\mathcal A$ be a computable copy of $(\omega,<)$. If $\varphi$ is an isomorphism from $(\omega,<)$ to $\mathcal A$, we write $R_\mathcal{A}$ for the image of $R$ under $\varphi$. 
\end{definition}
Now, in terms of computable structures, the desideratum of acceptability of a computable copy $\mathcal A = (\omega,<_{\mathcal{A}})$---which uniquely identifies a notation for $(\omega,<)$---says that $Succ_\mathcal{A}$ should be computable.


Shapiro showed, among others, that not every notation for $\omega$ is acceptable. The following question arises immediately: is $Succ_\mathcal{A}$ computable, if $\mathcal A$ is any computable copy $\mathcal A $ of $(\omega,<)$? In other words, is every computable copy of $(\omega,<)$ acceptable? In an influential philosophical paper, Benaceraff hinted in passing at the affirmative (cf. p. 276 in \cite{benacerraf_what_1965} and, also, \cite{benacerraf_recantation_1996}). Essentially, his claim was that, in any given notation, the computability of the successor is equivalent to the computability of the ordering (we take the liberty to identify Benaceraff's intuitive concept of notation with the formal one).  

As we will see, Benaceraff's claim is false in view of Proposition \ref{theorem:DgSp_Succ} below. The proposition uses an important notion of degree spectrum, originating from Richter \cite{richter_degrees_1977,richter_degrees_1981} and Harizanov \cite{harizanov_degree_1987} (see, also, \cite{hirschfeldt_degree_2000,fokina_comp_model_20l4}). The notion of degree spectrum---here defined for $(\omega,<)$ but, in general, applicable to any computable structure---is based on the concept of Turing degrees. 

As a brief reminder, Turing degrees are equivalence classes of the relation $\equiv_T$ on subsets of $\omega$, defined by $A \equiv_T B \Leftrightarrow A \leq_T B \wedge B \leq_T A$, where $X \leq_T Y$ means, roughly, that one can compute $X$ by an algorithm which, from time to time, may request the answer to any question of the form \emph{Does $n$ belong to $Y$?}. For more information, see, e.g., \cite{cutland,rogers_theory_1967,soare_recursively_1987}.

\begin{definition}\label{dgsp1}
The degree spectrum of a computable relation $R$ on $(\omega,<)$, in symbols $DgSp(R)$, is the set of Turing degrees of $R_\mathcal{A}$ over all computable copies $\mathcal A$ of $(\omega,<)$. 
\end{definition}
As we can see, the notion of degree spectrum encompasses all possible complexities of a relation (formalized as Turing degrees) across all notations for the underlying structure. 

If Benaceraff's claim was true, $Succ_\mathcal{A}$ would be computable for each computable copy $\mathcal A$ of $(\omega,<)$ and, therefore, we would have $DgSp(Succ) = \{\mathbf 0\}$, where $\mathbf{0} = \{A : A \equiv_T \emptyset\}$ is the degree of decidable sets. This, however, would contradict the proposition below. (Recall that a Turing degree is computably enumerable (c.e.) if it contains a computably enumerable set, i.e., a set which, if nonempty, can be enumerated by an algorithm. It is folklore that there are c.e. sets which are not computable and, therefore, that there are c.e. degrees which are different than $\mathbf 0$.)
\begin{proposition}[see, e.g., Example 1.3 in \cite{chubb_degree_2009}]\label{theorem:DgSp_Succ}
The spectrum of successor on $(\omega,<)$ consist of all (and only) c.e. degrees.
\end{proposition}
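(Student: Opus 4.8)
The plan is to establish the two inclusions separately. Writing $\mathbf{C}$ for the class of c.e.\ degrees, I would first show $DgSp(Succ)\subseteq\mathbf{C}$, that is, that the successor has c.e.\ degree in every computable copy; then I would prove the reverse, realizing each c.e.\ degree by an explicit construction.

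For the inclusion $DgSp(Succ)\subseteq\mathbf{C}$, fix any computable copy $(\omega,\prec)$. Since $(\omega,\prec)$ has order type $\omega$, every element has an immediate $\prec$-successor, and the graph of the successor is $Succ_\prec=\{(x,y): x\prec y \wedge \neg\exists z\,(x\prec z\prec y)\}$. As $\prec$ is computable, the complement of $Succ_\prec$ is c.e.: a pair $(x,y)$ lies outside $Succ_\prec$ exactly when $y\preceq x$ (decidable) or some witness $z$ with $x\prec z\prec y$ appears (a c.e.\ event). Hence $\overline{Succ_\prec}$ is c.e., and since every set is Turing-equivalent to its complement, $\deg(Succ_\prec)=\deg(\overline{Succ_\prec})$ is a c.e.\ degree. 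Note this does not make $Succ_\prec$ itself computable---a total function with merely co-c.e.\ graph need not be---which is exactly why its degree can be nontrivial while still being forced into $\mathbf{C}$.

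For the reverse inclusion, fix a c.e.\ set $W$ of degree $\mathbf{d}$ together with a computable enumeration $W=\bigcup_s W_s$ adding at most one element per stage, and build a computable copy $(\omega,\prec)$ of type $\omega$ in stages. Partition the intended order into consecutive cells $C_0\prec C_1\prec\cdots$, each element of $C_n$ lying $\prec$-below every element of $C_{n+1}$. Start each cell as a pair of anchors $x_n\prec y_n$ that are $\prec$-adjacent; whenever $n$ enters $W$, insert a fresh element strictly between $x_n$ and $y_n$, so that they become non-adjacent. The coding then runs both ways. On one hand $n\in W$ iff $(x_n,y_n)\notin Succ_\prec$, and $n\mapsto(x_n,y_n)$ is computable, so $W\leq_T Succ_\prec$. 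On the other hand, the cell structure is designed so that $W$ decides each adjacency locally: given any element $u$ one computably locates its cell, and a single query to $W$ determines the internal shape of that cell and hence the immediate $\prec$-successor of $u$; thus $Succ_\prec\leq_T W$. Therefore $Succ_\prec\equiv_T W$ and $\mathbf{d}\in DgSp(Succ)$; letting $W$ range over all c.e.\ sets yields $\mathbf{C}\subseteq DgSp(Succ)$.

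I expect the main obstacle to be the bookkeeping in the construction rather than the coding idea. Two points need care. First, I must ensure that $\prec$ is genuinely computable and of order type $\omega$ while every natural number is used; I would handle this by never reordering already-placed elements and by dropping each surplus number at the current top of the finite chain, so that each element acquires only finitely many predecessors and its relative order, once decided, is permanent. Second, I must guarantee that the comparatively weak oracle $W$---and not something stronger---suffices for $Succ_\prec\leq_T W$, which is precisely what the explicit cell structure buys, since the raw order $\prec$ alone would yield only a co-c.e.\ description of $Succ_\prec$. The first inclusion is routine once the co-c.e.\ observation is made; the genuine work lies in verifying that the staged construction simultaneously delivers computability of $\prec$, order type $\omega$, and both reductions.
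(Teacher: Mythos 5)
Your proposal is correct and follows essentially the same route as the source the paper cites for this proposition (the paper itself gives no proof, only the reference to Example~1.3 of Chubb--Frolov--Harizanov): the upper bound comes from observing that the graph of the successor is co-c.e.\ in any computable copy, hence of c.e.\ degree, and the lower bound from coding a c.e.\ set $W$ into adjacencies $x_n \prec y_n$ that are split exactly when $n$ enters $W$. The only point deserving explicit verification in a write-up is the one you already flag: that the finished order has type $\omega$ and that $W$ (rather than $W'$) suffices to decide every adjacency, which your cell discipline handles.
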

Therefore, not every computable copy $\mathcal A$ of $(\omega,<)$ is acceptable.

The degree spectrum of the successor on $(\omega,<)$, i.e., the set consisting of all c.e. degrees, is just one example of degree spectrum on $(\omega,<)$, but other examples exist and it is still not known what are all possibilities. One kind of degree spectrum is the trivial one, i.e., $\{\mathbf 0\}$. Relations having this spectrum are called intrinsically computable and were characterized by Moses \cite{moses_relations_1986}. 
There are also other kinds of spectra, including the set of all $\Delta_2$ degrees \cite{downey_degree_2009,wright_degrees_2018,harrison-trainor_degree_2018} and other, discovered quite recently \cite{bazhenov_intrinsic_2022}.

Anyway, the degree spectrum of the successor is, in a sense, special. Essentially, $DgSp(Succ) \subseteq DgSp(R)$ for every computable $R$ which is not intrinsically computable \cite{wright_degrees_2018}. 
To show that $DgSp(R) = DgSp(Succ)$, one typically uses a technique known as \emph{recovering the successor} \cite{bazhenov_intrinsic_2022}. The idea is to prove that $R_\mathcal{A} \geq_T Succ_\mathcal{A}$ holds for every computable copy $\mathcal A$ of $(\omega,<)$; this is sufficient because $Succ_\mathcal{A} \geq_T R_\mathcal{A}$ always, i.e., for every computable copy $\mathcal A$ of $(\omega,<)$. In a sense, such a proof shows how the successor (inside $\mathcal A$) can be recovered from the relation $R$ (again, inside $\mathcal A$). If this can be done, we say that the successor is recoverable from $R$ on $(\omega,<)$.

In the second part of the paper, we are interested whether one can go in the other direction. Specifically, we ask whether $DgSp(R) = DgSp(Succ)$ implies that the successor is recoverable from $R$ on $(\omega,<)$. In other words, given the following properties of a computable relation $R$:
\begin{enumerate}[label=(\Roman*)]
    \item the degree spectrum of $R$ on $(\omega,<)$ is equal to all c.e. degrees, and \label{p1}
    \item for every computable copy $\mathcal A$ of $(\omega,<)$, $R_\mathcal{A} \geq_T Succ_\mathcal{A}$,\label{p2}
\end{enumerate}
we ask whether \ref{p1} implies \ref{p2} (the other direction follows from one of the paragraphs above). Note that \ref{p2} can be seen as a relativized variant of acceptability of notation. We may posit that a computable copy (notation) $\mathcal A = (\omega,<_\mathcal{A})$ is acceptable relative to $R$ if $R_\mathcal{A}$ computes $Succ_\mathcal{A}$. Thus, the property in question encompasses computable relations such that every computable copy $(\omega,<_\mathcal{A})$ is acceptable relative to them.

 Although we are not able to prove this implication in full generality, we show that it holds for a wide subclass of all total computable functions (seen as computable binary relations). For certain types of functions, the implication already follows from earlier results which will be discussed in the next section. Our extension concerns the class of block functions, isolated in this context by Bazhenov et al. \cite{bazhenov_intrinsic_2022}, and satisfying an additional effectiveness condition.



\section{Spectrum of c.e. degrees and recovering the successor}\label{sec:result}
Following Bazhenov et al. \cite{bazhenov_intrinsic_2022}, we restrict our attention to computable binary relations $R$ of general interest---graphs of unary total computable functions. The results of Moses \cite{moses_relations_1986} imply that the degree spectrum of such an $f$ is trivial if and only if $f$ is almost constant or almost identity (see, also, the associated version of \cite{bazhenov_intrinsic_2022}). Clearly, the equivalence $\text{\ref{p1}}\Leftrightarrow\text{\ref{p2}}$ holds for such functions as they do not satisfy neither \ref{p1} nor \ref{p2}. Bazhenov et al. \cite{bazhenov_intrinsic_2022} isolated the class of quasi-block functions (which include almost constant and almost identity functions) and showed that all computable functions $f$ outside this class satisfy \ref{p1} and \ref{p2} (Theorem 18 in \cite{bazhenov_intrinsic_2022}). Recall that $f$ is a quasi-block function if there are arbitrarily long initial segments of $(\omega,<)$ which are closed under $f$. Further information about quasi-block functions, pertinent also to the problem at hand, can be found in \cite{bazhenov_intrinsic_2022}. 

Among quasi-block functions there is a narrower class of block functions.
\begin{definition}\label{def:block_function}
    Let $f\colon\omega\to\omega$ be a total function. An interval $I$ of $(\omega,<)$ is \emph{$f$-closed} if for all $x\in I$, $f(x)\in I$ and $f^{-1}(x)\subseteq I$. For a finite non-empty interval $I\subset\omega$, the structure $(I, <, f\upharpoonright I)$ is an \emph{$f$-block} if it has the following properties:\begin{itemize}
        \item $I$ is an $f$-closed interval and it cannot be written as a disjoint union of several $f$-closed intervals;
        \item $\{ x\in\omega\,\colon x < I\}$ is $f$-closed.
    \end{itemize}
    The function $f$ is a \emph{block function} if for every $a \in \omega$, there is an $f$-block containing $a$. If $(I, <, f\upharpoonright I)$ is an $f$-block, we refer to its isomorphism type as an \emph{$f$-type} (or a type).
\end{definition}

For a set $X$, by $\mathrm{card}(X)$ we denote the cardinality of $X$. 
Given a computable block function $f$, let us define the cardinality-of-the-preimage function $cp_f$ by
    $cp_f(x) = \mathrm{card}(f^{-1}(x))$, for all $x \in \omega$. We will focus on computable block functions $f$ satisfying the following effectiveness condition:
\begin{equation}\label{effectiveness_condition}
    \text{$cp_{f}$ is computable.}\tag{$\star$}
\end{equation}
For example, each computable block function $f$ with only finitely many $f$-types satisfies (\ref{effectiveness_condition}). As a more sophisticated example, consider any injective computable block function (such a function is composed of finite cycles).

Before we formulate the main theorem, let us prove the following lemma which will be used later.
\begin{lemma}\label{lemma:successor-recovery}
Let $f$ be a unary total computable function. If there exists an infinite computable sequence $(\mathcal B_j)_{j\in\omega}$ of growing initial segments of $(\omega,<,f)$ such that each $\mathcal B_j$ embeds in $(\omega,<,f)$ precisely once, then the successor is recoverable from $f$ on $(\omega,<)$.
\end{lemma}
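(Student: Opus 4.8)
The plan is to fix an arbitrary computable copy $\mathcal{A}=(\omega,<_\mathcal{A})$ of $(\omega,<)$, with an isomorphism $\varphi\colon(\omega,<)\to\mathcal{A}$, and to let $f_\mathcal{A}$ denote the image of $f$, so that $\varphi$ is an isomorphism of $(\omega,<,f)$ onto $(\omega,<_\mathcal{A},f_\mathcal{A})$. Since recoverability means $f_\mathcal{A}\geq_T Succ_\mathcal{A}$ for every such $\mathcal{A}$, it suffices to compute $Succ_\mathcal{A}$ using $f_\mathcal{A}$ as an oracle. The guiding idea is that, although $Succ_\mathcal{A}$ need not be computable outright---given $a$ one cannot in general certify that a candidate is its immediate $<_\mathcal{A}$-successor, since a $<_\mathcal{A}$-smaller witness could surface later---the successor does become computable once we can exhibit arbitrarily long finite $<_\mathcal{A}$-initial segments of $\mathcal{A}$. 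Writing $n_j$ for the length of $\mathcal{B}_j$ and putting $I_j:=\varphi(\{0,1,\dots,n_j-1\})$, the set $I_j$ is exactly the $<_\mathcal{A}$-least $n_j$ elements of $\mathcal{A}$, and $(I_j)_{j\in\omega}$ is a family of nested $<_\mathcal{A}$-initial segments of unbounded size. The first task is therefore to compute the $I_j$ uniformly from $f_\mathcal{A}$.

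The crux is this locating step, and it is where the hypothesis enters. Post-composition with $\varphi$ gives a bijection between the embeddings of $\mathcal{B}_j$ into $(\omega,<,f)$ and the embeddings of $\mathcal{B}_j$ into $\mathcal{A}$, so the assumption that $\mathcal{B}_j$ embeds precisely once transfers from $(\omega,<,f)$ to $\mathcal{A}$. Because $\mathcal{B}_j$ is an initial segment, the identity is an embedding into $(\omega,<,f)$; hence the unique embedding into $\mathcal{A}$ is $\varphi\upharpoonright\{0,\dots,n_j-1\}$, whose image is exactly $I_j$. To compute $I_j$ from $f_\mathcal{A}$, I enumerate the finite sets $F\subseteq\omega$ of cardinality $n_j$; each such $F$ carries a unique order isomorphism $e_F\colon(\{0,\dots,n_j-1\},<)\to(F,<_\mathcal{A})$, computable by sorting $F$ under the computable order $<_\mathcal{A}$, and I test whether $e_F$ is an embedding of $\mathcal{B}_j$ into $\mathcal{A}$. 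This test is decidable relative to $f_\mathcal{A}$, since it reduces to finitely many evaluations of $f_\mathcal{A}$ together with membership checks in the finite set $F$. By the uniqueness established above, exactly one $F$ passes the test, namely $F=I_j$, so the search terminates and returns $I_j$; thus $(I_j)_{j\in\omega}$ is uniformly computable from $f_\mathcal{A}$. I expect this to be the main obstacle: without the ``precisely once'' hypothesis the search could lock onto a spurious copy of $\mathcal{B}_j$, and it is precisely the transfer of uniqueness through $\varphi$ that simultaneously guarantees that the search halts and that it returns the genuine initial segment.

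Finally, I recover the successor from the sets $I_j$. Given $a\in\omega$, I compute $I_0,I_1,\dots$ until I reach an index $j$ with $a\in I_j$ and some $c\in I_j$ satisfying $a<_\mathcal{A}c$; such a $j$ exists because the lengths $n_j$ are unbounded. Since $I_j$ is a $<_\mathcal{A}$-initial segment, it is downward closed under $<_\mathcal{A}$, so the immediate $<_\mathcal{A}$-successor of $a$ already lies in $I_j$, and therefore $Succ_\mathcal{A}(a)=\min_{<_\mathcal{A}}\{c\in I_j: a<_\mathcal{A}c\}$, a minimum over a finite set that I can compute. The whole procedure uses only the computable order $<_\mathcal{A}$ and finitely many queries to $f_\mathcal{A}$, so $Succ_\mathcal{A}\leq_T f_\mathcal{A}$. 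As $\mathcal{A}$ was an arbitrary computable copy of $(\omega,<)$, the successor is recoverable from $f$ on $(\omega,<)$, as required.
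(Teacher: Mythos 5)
Your proof is correct and follows essentially the same strategy as the paper's: use the ``embeds precisely once'' hypothesis to conclude that any copy of $\mathcal B_j$ found inside $\mathcal A$ must be the genuine $<_\mathcal{A}$-initial segment, then read off $Succ_\mathcal{A}(a)$ once $a$ and some $<_\mathcal{A}$-larger element appear in such a segment. Your write-up is somewhat more explicit than the paper's (in particular about transferring uniqueness through the isomorphism $\varphi$ and about why the search terminates), but the underlying argument is the same.
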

\begin{proof}
To compute $Succ_{\mathcal A}(x)$ for a given computable copy $\mathcal A = (\omega,<_\mathcal{A})$ of $(\omega,<)$, look for bigger and bigger substructures $\mathcal A_j$ of $(\omega,<_\mathcal{A}, f_\mathcal{A})$ that look precisely as $\mathcal B_j$. We do this by enumerating $\omega$ one by one, arranging enumerated elements according to $<_\mathcal{A}$ and asking $f_\mathcal{A}$ for the values of the enumerated elements. Once $\mathcal A_j$ isomorphic to $\mathcal B_{j}$ is discovered, we are sure that $\mathcal A_j$ is an initial segment of $(\omega,<_\mathcal{A}, f_\mathcal{A})$. We continue in this manner and wait for $x$ and at least one $y >_\mathcal{A} x$ to enter some $\mathcal A_j$. At this point we know the position of $x$ in $<_\mathcal{A}$ and $Succ_\mathcal{A}(x)$ can be read out from $\mathcal A_j$.
\end{proof}

We prove \ref{p1}$\Rightarrow$\ref{p2} by contraposition. Notice that we can additionally assume that we work with computable block functions which are not almost identities because, as discussed earlier, such functions have the trivial degree spectrum. 

\begin{theorem}\label{theorem:non-c.e.}
    Let $f$ be a computable block function such that it is not almost identity and it satisfies the effectiveness condition (\ref{effectiveness_condition}). Then \ref{p1} implies \ref{p2}, i.e.: if the successor is not recoverable from $f$ on $(\omega,<)$,
    then the degree spectrum of $f$ on $(\omega,<)$ contains a non-c.e.\ degree.
\end{theorem}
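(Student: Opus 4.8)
The plan is to argue by contraposition: assuming the successor is \emph{not} recoverable from $f$, I will build a computable copy $\mathcal A=(\omega,<_\mathcal A)$ of $(\omega,<)$ in which $f_\mathcal A$ has non-c.e.\ Turing degree, which witnesses that $DgSp(f)$ is not contained in the c.e.\ degrees. Two features of the hypotheses drive the construction. First, the effectiveness condition (\ref{effectiveness_condition}) makes the whole block structure effective: since $cp_f$ is computable, for any $a$ one can compute the $f$-block of $a$ by closing under $f$, closing under $f^{-1}$ while certifying completeness via $cp_f$, and then convexifying; hence the sequence of blocks $B_0,B_1,\dots$, their $f$-types, their sizes, and the canonical initial segments appearing in Lemma~\ref{lemma:successor-recovery} are all uniformly computable. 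Second, since $f$ is not an almost identity and blocks are finite, infinitely many blocks are nontrivial (i.e.\ $f\neq\mathrm{id}$ on them); these are the only places where $f_\mathcal A$ carries information, and they will serve as coding material.

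The key structural point is that the type sequence $\tau_0\tau_1\cdots$ is an isomorphism invariant, so every copy presents the blocks in exactly this order; the only freedom lies in the computable presentation $<_\mathcal A$, built by insertions so as to have order type $\omega$, and $f_\mathcal A$ is then the forced pushforward $\varphi f\varphi^{-1}$. Thus $f_\mathcal A(a)$ depends only on the block of $a$ and on $a$'s offset inside it, and $f_\mathcal A$ fails to be computable precisely when the ranks $\varphi^{-1}(a)$ are not computable from $<_\mathcal A$. The role of non-recoverability is to supply the slack we need: I will show (this is the combinatorial heart, morally the contrapositive of Lemma~\ref{lemma:successor-recovery}) that if no algorithm recovers $Succ$ from $f$, then there is a computable, infinite supply of interchangeable same-type nontrivial blocks, i.e.\ locations at which one may delay or reorder the placement of blocks in $<_\mathcal A$ without ever betraying the fixed isomorphism type at any finite stage. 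Each such move shifts the ranks of a whole tail, and hence alters $f_\mathcal A$ in a controlled, non-computable way, while keeping $<_\mathcal A$ computable since the relative order of any two fixed codes is decided permanently.

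Using this supply of coding locations I will run a stage construction of $<_\mathcal A$ meeting, for every $e$, a requirement $N_e$ forcing $\deg(f_\mathcal A)\neq\deg(W_e)$; as every c.e.\ degree equals $\deg(W_e)$ for some $e$, this makes $\deg(f_\mathcal A)$ non-c.e. Concretely, $N_e$ reserves a fresh coding location and sets the $f_\mathcal A$-image of a designated code so as to spoil a putative pair of reductions between $f_\mathcal A$ and $W_e$, exploiting the fact that the $\Delta^0_2$ approximation to $f_\mathcal A$ keeps changing because ranks only increase as further blocks are inserted below; this lets $N_e$ act \emph{after} $W_e$ has committed. The ability to always find a fresh confusable location at which to act without disturbing the global isomorphism type is exactly what non-recoverability provides. (Alternatively one can code a single $\Delta^0_2$ set of non-c.e.\ degree, but the diagonalization against all $W_e$ is cleaner since $f_\mathcal A\leq_T Succ_\mathcal A$ and $\deg(Succ_\mathcal A)$ is always a c.e.\ degree, its graph being co-c.e.)

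The hard part will be the simultaneous satisfaction of all constraints. Maintaining a computable $<_\mathcal A$ of order type $\omega$ that is globally isomorphic to the fixed structure $(\omega,<,f)$ forces us to use each block exactly once and to fill every position eventually, which sharply limits the admissible coding moves; against these limitations we must still push $\deg(f_\mathcal A)$ to be genuinely non-c.e.\ rather than merely non-computable, and necessarily strictly below $\deg(Succ_\mathcal A)$. I expect the principal difficulty to be converting the hypothesis ``$Succ$ is not recoverable'' into a usable, \emph{computable} infinite family of coding locations: the set of initial segments that embed precisely once is only co-c.e., so non-recoverability does not hand us an effective sequence for free, and extracting one (or an equivalent supply of interchangeable same-type blocks) while coordinating it with the priority diagonalization is where the real work lies.
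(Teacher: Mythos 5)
Your overall frame---contraposition, building a computable copy $\mathcal A$ and diagonalizing $\deg_T(f_{\mathcal A})$ against every $W_e$, with the effectiveness condition~(\ref{effectiveness_condition}) used to compute blocks and non-recoverability used to supply ``confusable'' locations---matches the paper's strategy in outline. But there is a genuine gap, and you have in fact pointed at it yourself in your last paragraph: your plan requires, \emph{before} or alongside the diagonalization, a ``computable, infinite supply of interchangeable same-type nontrivial blocks,'' i.e.\ an effective family of places where an initial segment embeds more than once. Non-recoverability only gives you the negation of the hypothesis of Lemma~\ref{lemma:successor-recovery}: there is \emph{no computable sequence} of growing initial segments each embedding precisely once. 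As you note, ``embeds precisely once'' is only co-c.e., so this negation does not yield any effective supply of multiply-embeddable segments, and no argument is offered for extracting one. Since everything downstream of this step depends on it, the proposal as written does not constitute a proof.

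The paper resolves exactly this point by inverting the logic: it never extracts the coding locations in advance. Each $\mathcal P_i$-strategy runs a Cooper-style d.c.e.\ cycle on a designated pair $\langle x_m,y_m\rangle$ (force $\Gamma_A(\langle x_m,y_m\rangle)=0$, wait for $\Phi_i,\Psi_i,W_i$ to certify it, insert \emph{one} element to flip it to $1$, wait for certification again) and then merely \emph{searches} for a second, ``semi-isomorphic'' embedding of the accumulated segment $\mathcal B_m$ that would let it restore the bit to $0$ without breaking the isomorphism type---beating $W_i$, which, being c.e., cannot retract. Finding such an embedding is a c.e.\ event, so the search is legitimate; and if for \emph{every} $m$ the search fails forever, the construction has itself produced a computable sequence $(\mathcal B_m)$ of growing initial segments each embedding only once, which by Lemma~\ref{lemma:successor-recovery} contradicts non-recoverability. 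So the dichotomy ``diagonalize or produce the forbidden sequence'' replaces the effective extraction you were hoping for. Two further points your sketch leaves unaddressed and that the paper must (and does) handle: the explicit $0\to1\to0$ flip is essential---merely making $f_{\mathcal A}$ ``change in a non-computable way'' defeats computability, not c.e.-ness---and one must verify that the finished order has type $\omega$ (every element acquires only finitely many $<_{\mathcal A}$-predecessors), which the paper gets from condition~($\ast$) on the chosen embeddings together with the priority-tree bookkeeping.
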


%
%
        

\begin{proof}

Our construction builds a computable isomorphic copy $\mathcal{A} = (\omega, <_A)$ of the ordering $(\omega,<)$. For brevity, the graph  of the function $f_{\mathcal{A}}$ (i.e., the isomorphic image of our function $f$ inside $\mathcal{A}$) is denoted by $\Gamma_A$.

As usual, we fix an effective enumeration $(\Phi_i,\Psi_i, W_i)_{i\in\omega}$ containing all triples such that $\Phi_i$ and $\Psi_i$ are Turing functionals, and $W_i$ is a c.e.\ set. 
By Lemma~\ref{lemma:successor-recovery}, it is sufficient to satisfy the following series of requirements:
\begin{itemize}
    \item[$\mathcal{P}_i$:] If $W_i = \Phi_i^{\Gamma_A}$ and $\Gamma_A = \Psi_i^{W_i}$, then there exists a computable sequence $(\mathcal{B}_m)_{m\in\omega}$ of growing initial segments of $(\omega,<,f)$ such that each $\mathcal{B}_m$ isomorphically embeds into $(\omega,<,f)$ only once.
    
\end{itemize}
Indeed, since the successor is not recoverable from $f$, Lemma~\ref{lemma:successor-recovery} guarantees that the degree $\deg_T(\Gamma_A)$ is not c.e. Our strategy for satisfying $\mathcal{P}_i$ incorporates the classical construction of a properly d.c.e.\ Turing degree by Cooper~\cite{cooper_degrees_1971} adapted to the setting of block functions \cite{bazhenov_intrinsic_2022}. Its eventual success is secured by a series of threats which attempt to build $(\mathcal B_m)$. This will be a degenerate infinite injury construction, described using the framework of trees of strategies (see, e.g., \cite{lempp_priority_2012}).



For the sake of simplicity, first we give a construction for the case when each block is isomorphic to a cycle of the following form: 
\begin{itemize}
    \item the domain of a cycle contains numbers $0,1,\dots,k$ for some $k\geq 0$, 
    
    \item the ordering of the domain is standard, and 
    
    \item $f(i) = {i+1}$ for $i<k$, and $f(k) = 0$.
\end{itemize}
Note that in this case, we have $cp_{f}(x) = 1$ for all $x\in\omega$. 

The modifications needed for the general case of the theorem will be discussed in the end of the proof.


\textsc{Strategy for $\mathcal{P}_i$.} The strategy attempts to build a computable sequence of initial segments $(\mathcal{B}_m)_{m\in\omega}$. Suppose that our strategy starts working at a stage $s_0$. Then the strategy proceeds as follows.

(1)\ Set $m = 0$.
    
(2)\ Choose two adjacent blocks $\tilde{\mathcal{C}}_m < \tilde{\mathcal{D}}_m$ in $(\omega,<,f)$ such that we have not copied them into the structure $\mathcal{A}_{s_m}$ yet (where $s_m$ is the current stage), and $\tilde{\mathcal{D}}_m$ contains at least two elements. Such a choice is possible, since the function $f$ is not almost identity (hence, it has infinitely many cycles of size $\geq 2$).

We extend $\mathcal{A}_{s_m}$ to a finite structure $\mathcal{A}^1_m$ by copying all missing elements up to the end of the block $\tilde{\mathcal{D}}_m$ (these elements are appended to the end of $\mathcal{A}_{s_m}$). Then the structure $\mathcal{A}^1_m$ can be decomposed as follows:
\[
    \mathcal{A}^1_m = \mathcal{A}^{1,init}_m + \mathcal{C}_m + \mathcal{D}_m,
\]
where $\mathcal{C}_m \cong \tilde{\mathcal{C}}_m$ and $\mathcal{D}_m \cong \tilde{\mathcal{D}}_m$.

Let $x_m$ be the rightmost element of $\mathcal{C}_m$, and let $y_m$ be the leftmost element of $\mathcal{D}_m$. It is clear that at the moment, $\langle x_m,y_m\rangle \not\in \Gamma_A$. As usual, we restrict ``$\langle x_m,y_m\rangle \not\in \Gamma_A$" by forbidding lower priority actions to add new elements between the elements of $\mathcal{A}^1_m$.

(3)\ Wait for a stage $s'>s_m$ witnessing the following computations: for some $t'\leq s'$, we have (at the stage $s'$)
\[
    W_{i}\upharpoonright t' = \Phi_i^{\Gamma_A}\upharpoonright t', \text{ and } 0 = \Gamma_A(\langle x_m,y_m\rangle) = \Psi_i^{W_i\upharpoonright t'}(\langle x_m,y_m\rangle).
\]
Note that the current structure $\mathcal{A}_{s'}$ can be decomposed as follows:
\[
    \mathcal{A}_{s'} = \mathcal{A}^1_m + \mathcal{A}^{2,fin}_m = \mathcal{A}^{1,init}_m + \mathcal{C}_m + \mathcal{D}_m + \mathcal{A}^{2,fin}_m,
\]
where $\mathcal{A}^{2,fin}_m$ contains the elements added after the end of Step~(2).

(4)\ We define $\mathcal{B}_m := \mathcal{A}_{s'}$. We extend $\mathcal{A}_{s'}$ by adding \emph{precisely one} fresh element between the rightmost element of $\mathcal{A}^{1,init}_m$ and the leftmost element of $\mathcal{C}_m$. 

Inside the resulting structure $\mathcal{A}^3_m$, the number $x_m$ becomes the leftmost element of a copy of the cycle $\mathcal{D}_m$. This implies that at the moment, we have $\langle x_m,y_m\rangle \in \Gamma_A$. Similarly to Step~(2), we restrict ``$\langle x_m,y_m\rangle \in \Gamma_A$".

(5)\ We wait for a stage $s''>s'$ witnessing the following condition: for some $t''\leq s''$, we have $t'\leq t''$ and
\[
    W_{i}\upharpoonright t'' = \Phi_i^{\Gamma_A}\upharpoonright t'', \text{ and } 1 = \Gamma_A(\langle x_m,y_m\rangle) = \Psi_i^{W_i\upharpoonright t''}(\langle x_m,y_m\rangle).
\]

When the stage $s''$ is found, we go back to Step~(2) with $m+1$ (in place of $m$), while \emph{simultaneously waiting} at Step~(6) with $m$.

(6) We wait for a stage $s'''>s''$ witnessing the following condition: one can embed the elements of $\mathcal{A}_{s'''}$ into $(\omega,<,f) \upharpoonright (2 \cdot\mathrm{card}(\mathrm{dom}(\mathcal{A}_{s'''}))+1)$ in the following special ``semi-isomorphic" way. There exists a 1-1 function $\xi: \text{dom}(\mathcal{A}_{s'''}) \to [0,2 \cdot\mathrm{card}(\mathrm{dom}(\mathcal{A}_{s'''}))]$ such that:
\begin{itemize}
    \item $\xi$ respects the ordering, i.e., for \emph{all} elements $x,y \in\mathcal{A}_{s'''}$, $x <_{\mathcal{A}_{s'''}} y \Leftrightarrow \xi(x) < \xi(y)$;

    \item if $x$ (originally) was an element of some block $I$ inside $\mathcal{A}_{s'} = \mathcal{B}_m$, then the whole block $I$ should go into some copy of $I$ inside $(\omega,<,f)$; in other words,
    the restriction of $\xi$ to the domain of $\mathcal A_{s'}$ is an isomorphic embedding from $(\mathrm{dom}(\mathcal{A}_{s'}), <_{\mathcal{A}_{s'}},f_{\mathcal{A}_{s'}})$ into $(\omega,<,f)$.
\end{itemize}

Note the following: if such a ``semi-isomorphic" embedding $\xi$ exists, then one may assume that this $\xi$ satisfies an additional condition:
\begin{itemize}
    \item[($\ast$)] If $I$ is a block inside $\mathcal{A}^{1,init}_m$, then $I$ is mapped to its counterpart inside $(\omega,<,f)$ (or more formally, if $x\in I$ and $x$ is the $i$-th element from the left inside $\mathcal{A}_{s'''}$, then $\xi(x)$ equals $i$).
\end{itemize}
Indeed, since the structure $\mathcal{A}^3_m$ from Step~(4) is obtained by adding only one element just before $\mathcal{C}_m$, the embedding $\xi$ must move the contents of $\mathcal{C}_m$  to the right of the $(\omega,<,f)$-counterpart of $\mathcal{A}^{1,init}_m$. This allows us not to move $\mathcal{A}^{1,init}_m$, and just map it to its $(\omega,<,f)$-counterpart.

(7)\ Extend $\mathcal{A}_{s'''}$ to a finite structure by using the ``semi-isomorphic" embedding $\xi$ described above. More formally, we take the least unused numbers $y\not\in\mathrm{dom}(\mathcal{A}_{s'''})$ to extend $\mathcal{A}_{s'''}$ to a finite structure $\mathcal{A}^4_m$ such that there exist a number $N$ and an isomorphism $h\colon\mathcal{A}^4_m \cong (\omega,<,f)\upharpoonright N$ with the property $\xi\subseteq h$. Stop the strategy, including the actions for all $m'\neq m$.

\emph{Outcomes:} 

$w_m$: Waiting at Step~(3) forever for this $m$. Then either $W_i\neq \Phi_i^{\Gamma_A}$ or $\Gamma_A \neq \Psi_i^{W_i}$.

$w'_m$: Waiting at Step~(5) forever for this $m$. Then, again, $W_i\neq \Phi_i^{\Gamma_A}$ or $\Gamma_A \neq \Psi_i^{W_i}$

$s$: ``Stop", i.e., some $m$ reached Step~(7). Then we have:
\begin{itemize}
    \item $0 = \Gamma_A(\langle x_m,y_m\rangle) = \Gamma_{A,s'''+1}(\langle x_m,y_m\rangle) = \Psi_{i,s'}^{W_{i,s'}\upharpoonright t'}(\langle x_m,y_m\rangle)$.
    
    \item Let $u$ be the use for the computation $\Psi_{i,s'}^{W_{i,s'}\upharpoonright t'}(\langle x_m,y_m\rangle) = 0$ at Step~(3). Notice that $u\leq t'$. Since at Step~(5) we see $\Psi_{i,s''}^{W_{i,s''}\upharpoonright t''}(\langle x_m,y_m\rangle) = 1$, this implies that there exists an element $a\leq u$ such that $a\in W_{i,s''}\setminus W_{i,s'}$.
    
    \item Since $a\not\in W_{i,s'}$, at Step~(3) we have $0 = \Phi_{i,s'}^{\Gamma_{A,s'}}(a)$. Let $v$ be the use for the computation $\Phi_{i,s'}^{\Gamma_{A,s'}}(a) = 0$.
    
    \item The embedding $\xi$ from Step~(6) guarantees that $\Gamma_A \upharpoonright v = \Gamma_{A,s'}\upharpoonright v$. Hence, $W_i(a) = W_{i,s''}(a) = 1 \neq 0 = \Phi_i^{\Gamma_A}(a)$.
\end{itemize}
Therefore, the requirement $\mathcal{P}_i$ is satisfied.



$\infty$: Eventually waiting at Step~(6) for each $m\in\omega$. Then for each $m$, every $\mathcal{A}_{s'''}$ lacks an appropriate ``semi-isomorphic" embedding $\xi$. This means that each $\mathcal{B}_m$ can be isomorphically embedded only once into $(\omega,<,f)$. As discussed above, this contradicts Lemma~\ref{lemma:successor-recovery}.

The current outcome of a strategy is equal to:
\begin{itemize}
    \item $s$, if the strategy is already stopped.
    
    \item Otherwise, let $m$ be the current (maximal) value of our strategy parameter $m$. If for this $m$, we wait at Step~(3), then the outcome is $w_m$. If we wait at Step~(5), then the outcome is $w'_m$.
\end{itemize}

\textsc{Construction.} We use the following ordering of the finitary outcomes: $s < \dots < w'_2 < w_2 < w'_1< w_1 < w'_0 < w_0$. 
The tree of strategies includes only the finitary outcomes. More formally, we set $\Lambda = \{ s\} \cup \{ w_m, w'_m \,\colon m\in\omega\}$, and the tree $T$ is equal to $\Lambda^{<\omega}$. The $i$-th level of the tree contains strategies $\alpha\in T$ devoted to the requirement $\mathcal{P}_i$.

If $\sigma$ and $\tau$ are two finite strings from $T$, then by $\sigma\widehat{\ }\tau$ we denote the concatenation of $\sigma$ and $\tau$. As usual, we say that $\sigma$ is \emph{to the left} of $\tau$ if there exist some $\rho\in T$ and $o_1,o_2 \in \Lambda$ such that $o_1 < o_2$, $\sigma\supseteq \rho\widehat{\ }o_1$, and $\tau \supseteq \rho\widehat{\ }o_2$.

As usual, at a stage $s$ of the construction we visit the strategies $\alpha_0,\alpha_1,\dots,\alpha_s$, where $\alpha_0 = \emptyset$, and for each $i<s$ we have $\alpha_{i+1} = \alpha_i \widehat{\ }o$, where $o$ is the current outcome of the strategy $\alpha_i$. By $g_s$ we denote the current finite path, i.e., the sequence $(\alpha_0,\alpha_1,\dots,\alpha_s)$.

\textsc{Verification.} It is clear that the constructed $\mathcal{A}$ is a computable linear order on $\omega$: indeed, if $x<y$ inside $\mathcal{A}_s$ for some $s\in\omega$, then $x <_{\mathcal{A}_t} y$ for all $t\geq s$.

Let $g$ be the true path of the construction, i.e., the limit $\lim_s g_s$. More formally, for $k\in\omega$ and $\alpha\in T$, here we have
\[
    g(k) = \alpha\ \Leftrightarrow\ \exists t (\forall s\geq t) (g_s(k) = \alpha).
\]
Note that in general, $g$ could be a finite sequence. 

\begin{lemma}\label{lemma:inf-path}
    The path $g$ is infinite, and every requirement $\mathcal{P}_i$ is satisfied.
\end{lemma}
\begin{proof}
    Suppose that a strategy $\alpha$ belongs to the true path $g$, and its associated requirement is $\mathcal{P}_i$. Consider the following three cases.
    
    \emph{Case~1.} There is a number $m$ such that starting from some stage $s$, the outcome of $\alpha$ is always the same $o \in \{ w_m,w_m'\}$. Then it is clear that $\alpha\widehat{\ }o$ belongs to $g$. In addition, for the number $m$, the strategy is forever stuck either at Step~(3) or at Step~(5). This means that $W_i\neq \Phi_i^{\Gamma_A}$ or $\Gamma_A \neq \Psi_i^{W_i}$.
    
    \emph{Case~2.} At some point of time $\alpha$ has outcome $s$. Then $\alpha\widehat{\ }s$ lies on the path~$g$. In addition, $W_i \neq \Phi_i^{\Gamma_A}$ as discussed in the description of the outcome $s$.
    
    \emph{Case~3.} Otherwise, for each $m\in\omega$, $\alpha$ eventually goes through the outcomes $w_m$ and $w'_m$. Since we never reach Step~(7), this means that each finite structure $\mathcal{B}_m$ can be isomorphically embedded into $(\omega,<,f)$ only once: indeed, if some $\mathcal{B}_m$ could be embedded twice, then we could eventually use the second such embedding to recover the ``semi-isomorphic" map $\xi$ and to reach Step~(7) for this $m$. Then, as discussed in the beginning of the proof of the theorem, Lemma~\ref{lemma:successor-recovery} guarantees that Case~3 is impossible.
    
    We conclude that the path $g$ is infinite, and each $\mathcal{P}_i$ is satisfied.
\end{proof}

In order to finish the proof of Theorem~\ref{theorem:non-c.e.}, now it is sufficient to show that the order $\mathcal{A} = (\omega, <_A)$ is isomorphic to $(\omega,<)$. 

Recall that at Step~(6), we always choose a map $\xi$ satisfying Condition~($\ast$). Consider a $P_i$-strategy $\alpha$. Condition~($\ast$) implies that for every $m\in\omega$ and every $x\in \mathcal{A}^{1,init}_m$, $\alpha$ never adds new elements which are $<_A$-below $x$.

Suppose that an element $x$ is added to $\mathcal{A}$ by some strategy $\sigma$.

Consider an arbitrary strategy $\alpha$. If $\alpha$ is to the right of $\sigma$, then $\alpha$ never works after the starting stage of $\sigma$. If $\alpha \supset \sigma$ or $\alpha$ is to the left of $\sigma$, then $x$ always belongs to $\mathcal{A}^{1,init}_m$ of this particular $\alpha$. Hence, $\alpha$ never adds elements $<_{A}$-below $x$.

We deduce that new elements which are $<_A$-below $x$ could be added only by $\alpha\subseteq \sigma$. The proof of Lemma~\ref{lemma:inf-path} implies that each such $\alpha$ adds only finitely many elements to $\mathcal{A}$. Therefore, for an arbitrary element $x$, $\mathcal{A}$ contains only finitely many elements $<_A$-less than $x$. This implies that $(\omega,<_A)$ is isomorphic to $(\omega,<)$. This concludes the proof for the case when each $f$-block is a cycle.


Now we discuss the general case of the theorem. The proof essentially follows the outline provided above, but we have to address two important details of how to implement the strategy for $\mathcal{P}_i$.

(i)\ In Step~(2), one needs to choose two adjacent blocks $\tilde{\mathcal{C}}_m$ and $\tilde{\mathcal{D}}_m$. The question is how to \emph{algorithmically} choose them?

Here the computability of the function $cp_f(x)$ is important---this fact guarantees that for a given  $x\in\omega$, one can effectively recover the $f$-block $I$ containing $x$. This effective recovery allows us to computably find the needed blocks $\tilde{\mathcal{C}}_m$ and $\tilde{\mathcal{D}}_m$.

The recovery of the block $I$ can be arranged as follows. Without loss of generality, we may assume that $x$ is the leftmost element of $I$. Since we know the value $cp_f(x)$, we can find all elements from the preimage $f^{-1}(x)$. By using the function $cp_f$ several times, we eventually find the \emph{finite} set
\[
    P_f(x) = \bigcup_{j\in\omega} (f^{-1})^{(j)} (x).
\]
If $P_f(x)$ already forms an $f$-block, then we stop the algorithm. Otherwise, there is (the least) $y_0\not\in P_f(x)$ such that $x<y_0<\max P_f(x)$. We find the finite set $P_f(y_0)$. If the set $P_f(x)\cup P_f(y_0)$ forms an $f$-block, then we stop. Otherwise, again, we find the least $y_1\not\in P_f(x)\cup P_f(y_0)$ such that $y_0<y_1<\max(P_f(x)\cup P_f(y_0))$. We consider $P_f(x) \cup P_f(y_0) \cup P_f(y_1)$, etc. Since every $f$-block is finite, eventually we will find the desired $f$-block $I$.

(ii)\ Since the block function $f$ is not almost identity, $f$ satisfies at least one of the following two conditions:

(a) There are infinitely many elements $u$ such that $u$ is the leftmost element of its block and $f(u)>u$.
    
(b) There are infinitely many pairs $(u,v)$ such that $u$ and $v$ belong to the same block, $u$ is the leftmost element of the block, $u < v$, and $f(v) = u$.

Assume that $f$ satisfies~(b) (the case of~(a) could be treated in a similar way). Then in Step~(2) of the strategy, we can always choose $\tilde{\mathcal{C}}_m, \tilde{\mathcal{D}}_m$ with the following property: the block $\tilde{\mathcal{D}}_m$ contains a pair $(\tilde{u}, \tilde{v})$ satisfying the condition described in item~(b).

After building $\mathcal{A}^1_m$ (as described in Step~(2)), we will choose $x_m$ and $y_m$ as follows. The element $y_m$ is the rightmost element of $\mathcal{C}_m$, and the element $x_m$ is the copy (inside $\mathcal{D}_m$) of the element $\tilde{v} \in \tilde{\mathcal{D}}_m$. 

The intention behind this particular choice of $x_m,y_m$ is the following. At the end of Step~(2), we have $\langle x_m, y_m\rangle \not\in \Gamma_A$. \emph{In addition}, if we add \emph{precisely one} element at Step~(4), then we will immediately obtain that the condition $\langle x_m, y_m\rangle \in \Gamma_A$ becomes satisfied (since $\tilde{u} = f(\tilde{v})$ is the leftmost element of $\tilde{\mathcal{D}}_m$).

Taking the discussed details into account, one can arrange the proof of the general case in a straightforward manner.
Theorem~\ref{theorem:non-c.e.} is proved.
\end{proof}


\section{Conclusions}

The philosophical framework of Shapiro~\cite{shapiro_acceptable_1982} is based on the observation that typically, computations are performed on syntactic objects rather than on natural numbers themselves. This leads to the formal notion of a \emph{notation}. Among all notations, Shapiro isolated the class of acceptable notations---those notations have particularly nice computational properties.

Shapiro's notations became influential in the philosophy of computing. In this paper, we partly re-cast the framework of notations within the setting of computable structure theory. As a test case, we work with the structure $(\omega,<)$, i.e., natural numbers with the standard order (some of the early Shapiro's results are reproduced in Appendix~\ref{sec:A}). We hope that this will put computable structures on philosophers' radar and lead to cross-fertilization with formal philosophy. 

Our main technical contribution (Theorem~\ref{theorem:DgSp_Succ}) explores the connection between two natural properties which arise in computable structure theory. For a unary total computable function $f$, we look at the following properties: \ref{p1} when the degree spectrum of $f$ on $(\omega,<)$ contains precisely the c.e.\ degrees; and \ref{p2} the recoverability of successor from $f$. The second property can be framed as \emph{relativized} acceptability of notation. Theorem~\ref{theorem:non-c.e.} proves that for a large class $K$ of functions $f$, the two conditions given above are equivalent. Informally speaking, our proof uses the fact that the functions from the class $K$ have pretty tame combinatorial properties.

\setcounter{footnote}{0}

As a concluding remark, we note that in general, Theorem~\ref{theorem:non-c.e.} does not cover the class of all computable functions---even for the case of block functions.

\begin{theorem}[Appendix~\ref{sect:second-theorem}]\label{theorem:another-example}
    There exists a computable block function $f$ such that:
    \begin{enumerate*}
        \item the corresponding function $cp_f(x)$ is not computable;
        \item each $f$-block occurs infinitely often in $(\omega,<,f)$; 
        \item the degree spectrum of $f$ on $(\omega,<)$ contains all $\Delta_2$ degrees.
    \end{enumerate*}
\end{theorem}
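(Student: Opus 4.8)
The plan is to construct a computable block function $f$ satisfying all three conditions by interleaving two design goals: first, arranging that every $f$-block isomorphism type that appears does so infinitely often (this will preclude the successor-recovery argument of Lemma~\ref{lemma:successor-recovery} from applying, and is what makes item (2) hold); and second, encoding an arbitrary $\Delta_2$ set into the ``shape data'' of the blocks in a way that, in a suitable computable copy, forces the image of $f$ to compute that set. The function $cp_f$ will be made non-computable as a byproduct of the encoding: the cardinalities of preimages will carry the undecidable information, so knowing $cp_f(x)$ would decide something we have arranged to be only $\Delta_2$.

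First I would fix a computable list of finite block types $(T_n)_{n\in\omega}$ that are pairwise non-isomorphic and rich enough that small perturbations (e.g.\ adding or removing a single element in a preimage fan) produce distinguishable types. Given a target $\Delta_2$ degree $\mathbf{d}$ represented by a set $D$ with a computable approximation $D_s(n)\to D(n)$, I would lay out infinitely many disjoint copies of a ``coding gadget'' along $(\omega,<)$, where the $k$-th gadget for input $n$ has a block whose preimage-cardinality profile depends on the current approximation $D_s(n)$. Because $D$ is only $\Delta_2$, the true profile is a limit of computable guesses; to keep $f$ itself computable while $cp_f$ becomes non-computable, I would commit each block's final shape permanently once laid down, but choose which of infinitely many already-placed copies ``counts'' as the canonical answer for $n$ via the limiting approximation. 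The key trick, following the block-function technology of \cite{bazhenov_intrinsic_2022}, is that many isomorphic copies of each relevant type coexist, so no individual placement ever needs to be revised; item (2) is secured precisely by flooding the order with repeated types.

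For the degree-spectrum lower bound in item (3), I would build, for an arbitrary $\Delta_2$ set $X$, a computable copy $\mathcal{A}=(\omega,<_\mathcal{A})$ of $(\omega,<)$ in which $f_\mathcal{A}$ computes $X$, and conversely verify $f_\mathcal{A}\le_T X'$-style bounds to land inside $\Delta_2$; the standard approach is a $0'$-priority (true-stage) construction that permutes the placement of blocks in $\mathcal{A}$ so that reading the local block-type around a coding location of $\mathcal{A}$ reveals $X(n)$. Since every $\Delta_2$ degree must be shown achievable, I would run this construction uniformly in an index for the $\Delta_2$ approximation of $X$, using the $\infty$-injury / tree-of-strategies bookkeeping exactly as in the main theorem so that each coding requirement is met on the true path. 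The upper bound $DgSp(f)\subseteq\{\Delta_2\text{ degrees}\}$ should follow from the general fact that the image of any computable relation on a computable copy of $(\omega,<)$ is $\Delta_2$, since the isomorphism to the standard copy is computable in $0'$.

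The main obstacle I anticipate is reconciling three tensions simultaneously: $f$ must be genuinely computable (so block shapes are fixed forever once chosen), $cp_f$ must be non-computable (so the undecidable content must live in \emph{which} block is canonical, not in any single revisable value), and yet the coding must be robust enough that \emph{every} $\Delta_2$ degree — not merely one incomputable one — is realized as a spectrum member. Threading this needle requires that the ``canonical copy'' selection be expressible by a limit of a computable function (to stay $\Delta_2$) while the repeated-copies structure remains a fixed computable object; I expect the delicate part to be proving that the constructed copy $\mathcal{A}$ is genuinely isomorphic to $(\omega,<)$ (i.e.\ that each element has only finitely many $<_\mathcal{A}$-predecessors), which, as in the proof of Theorem~\ref{theorem:non-c.e.}, reduces to showing each strategy adds only finitely many elements to the left of any fixed point and that Condition-($\ast$)-type invariants prevent unbounded leftward insertion.
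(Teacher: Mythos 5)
Your overall architecture for items (2) and (3) is in the right spirit --- flood $(\omega,<,f)$ with repeated copies of every block type, then, for an arbitrary $\Delta_2$ set $X$, code $X$ into a computable copy $\mathcal{A}$ by choosing which of two locally indistinguishable block configurations sits at each coding location (the paper does exactly this, using infinitely many adjacent fixed-point pairs versus $2$-cycles: $f_\mathcal{A}(4e)=4e+2$ iff $e\in X$). But your mechanism for item (1) has a genuine gap. You propose to make $cp_f$ non-computable ``as a byproduct of the encoding,'' by committing each block's shape permanently once laid down and letting the undecidable content live in \emph{which} of the many placed copies ``counts'' as canonical. This cannot work: $cp_f(x)=\mathrm{card}(f^{-1}(x))$ is determined by the single fixed function $f$ alone, and no notion of a ``canonical copy'' enters its definition. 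Moreover, the theorem requires one fixed $f$ whose spectrum contains \emph{all} $\Delta_2$ degrees, so you cannot bake any particular $\Delta_2$ approximation $D_s(n)$ into the construction of $f$ itself, as your second paragraph suggests; the $\Delta_2$ set must be coded only into the copy $\mathcal{A}$, which varies, while $f$ stays put.

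The correct (and much simpler) route for item (1) is direct diagonalization against all partial computable functions $\varphi_i$, and it requires precisely that block shapes \emph{not} be committed permanently: reserve a fresh cycle whose leftmost element is $w_i$, so that initially $cp_f(w_i)=1$; if $\varphi_i(w_i)$ ever converges to $1$, extend $f$ by one fresh element $x$ with $f(x):=w_i$, forcing $cp_f(w_i)=2$. Whether this extension happens is $\Sigma_1$-complete information, so $cp_f$ is not computable, while $f$ remains computable because the \emph{value} $f(x)$ is fixed the moment $x$ enters the domain --- only the preimage structure of earlier elements changes. A global requirement then re-duplicates every block type (including the newly enlarged ones) twice at the end of each stage, which simultaneously secures item (2) and supplies the adjacent identical gadgets needed for the $\Delta_2$ coding. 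Finally, no tree-of-strategies or $\infty$-injury machinery is needed here: the whole construction of $f$ is a standard finite injury argument, and the copy $\mathcal{A}$ for a given $X$ is built by an ordinary limit-approximation argument, not by the apparatus of Theorem~\ref{theorem:non-c.e.}.
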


Nevertheless, we conjecture that the equivalence of \ref{p1} and \ref{p2} could be established for the class encompassing all functions of interest:

\begin{conjecture} 
If the degree spectrum of a computable block function $f$ on $(\omega,<)$ is equal to all c.e.\ degrees, then the successor is recoverable from $f$ on $(\omega,<)$.
\end{conjecture}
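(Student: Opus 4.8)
The plan is to establish the contrapositive exactly as stated: assuming that the successor is \emph{not} recoverable from $f$, I will construct a single computable copy $\mathcal{A} = (\omega, <_A)$ of $(\omega,<)$ whose induced function-graph $\Gamma_A$ (the image of the graph of $f$ inside $\mathcal{A}$) has a non-c.e.\ Turing degree. Since $\deg_T(\Gamma_A) \in DgSp(f)$, this places a non-c.e.\ degree in the spectrum and refutes \ref{p1}. The driving observation is the contrapositive of Lemma~\ref{lemma:successor-recovery}: non-recoverability of the successor means that \emph{no} computable sequence of growing initial segments of $(\omega,<,f)$ can have the property that each segment embeds only once. I will weaponise this against any potential Turing equivalence between $\Gamma_A$ and a c.e.\ set.

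Concretely, for each index $i$ I impose a requirement $\mathcal{P}_i$ asserting: if $W_i = \Phi_i^{\Gamma_A}$ and $\Gamma_A = \Psi_i^{W_i}$, then I can exhibit a computable sequence $(\mathcal{B}_m)_{m\in\omega}$ of growing initial segments of $(\omega,<,f)$ each embedding into $(\omega,<,f)$ exactly once. By Lemma~\ref{lemma:successor-recovery} the conclusion contradicts non-recoverability, so meeting $\mathcal{P}_i$ forces $\Gamma_A \not\equiv_T W_i$, and meeting every $\mathcal{P}_i$ then yields that $\deg_T(\Gamma_A)$ is not c.e. To realise $\mathcal{P}_i$ I would adapt Cooper's construction of a properly d.c.e.\ degree~\cite{cooper_degrees_1971} to the block setting of~\cite{bazhenov_intrinsic_2022}, first treating the simplified case in which every $f$-block is a single cycle (so $cp_f \equiv 1$). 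The module copies $(\omega,<,f)$ into $\mathcal{A}$ block by block, but stages a ``Cooper flip'' on a chosen codeword $\langle x_m, y_m\rangle$: choose two fresh adjacent blocks $\mathcal{C}_m < \mathcal{D}_m$ with $\mathcal{D}_m$ nontrivial (at least two elements, possible because $f$ is not almost identity), arrange $\langle x_m, y_m\rangle \notin \Gamma_A$, wait for the hypothesised computations to certify value $0$, then insert a single new element so that $\langle x_m, y_m\rangle$ enters $\Gamma_A$, and wait for the computations to certify value $1$. Standard Cooper bookkeeping locates an $a$ that must have entered $W_i$ to explain the change in $\Psi_i^{W_i}$, whereas $\Phi_i^{\Gamma_A}(a)=0$ was certified with some use $v$; freezing $\Gamma_A\upharpoonright v$ then gives $W_i(a)\neq\Phi_i^{\Gamma_A}(a)$, refuting the hypothesis of $\mathcal{P}_i$.

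The step I expect to be the main obstacle is reconciling two competing demands on the final extension of $\mathcal{A}$: it must become isomorphic to a genuine initial segment of $(\omega,<,f)$ (so that $\mathcal{A}\cong(\omega,<)$ in the limit), yet it must not disturb the frozen low part $\Gamma_A\upharpoonright v$ on which the diagonalisation depends. My plan is to search, at a later stage, for a ``semi-isomorphic'' embedding $\xi$ of the current finite approximation into $(\omega,<,f)$ that is order-preserving, block-faithful on the already-certified segment $\mathcal{B}_m$, and fixes the pre-flip initial segment pointwise (a condition I will call $(\ast)$); I then extend $\mathcal{A}$ \emph{along} $\xi$ into an honest initial segment of $(\omega,<,f)$. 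If such a $\xi$ is never found, the strategy waits forever, and the segments $\mathcal{B}_m$ it has certified each embed only once---precisely the forbidden computable sequence---so this infinite outcome cannot lie on the true path. I would orchestrate the infinitely many $\mathcal{P}_i$ on a tree of strategies with a single linear order on the finitary outcomes (a degenerate infinite-injury argument), and verify that $\mathcal{A}$ has order type $\omega$ by observing that condition $(\ast)$ forces each strategy to insert new elements only to the right of the initial segments it has committed to, so every element acquires only finitely many $<_A$-predecessors.

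Finally, I would discharge the cycle-only simplification, and this is exactly where the two hypotheses enter. To choose blocks \emph{algorithmically} I would invoke the effectiveness condition~(\ref{effectiveness_condition}): computability of $cp_f$ lets me, starting from any $x$, compute the finite $f$-block containing $x$ by repeatedly closing under preimages and filling the resulting order-gaps until the set is block-closed. To guarantee that a \emph{single} insertion flips the target codeword I would use that $f$ is not almost identity, which forces one of two combinatorial patterns---infinitely many blocks whose leftmost element maps strictly upward, or infinitely many blocks containing a non-leftmost element mapping back to the leftmost one---and I would select $x_m,y_m$ inside $\mathcal{D}_m$ according to whichever pattern holds. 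With these two adaptations in place, the general construction runs exactly as in the cycle case, completing the proof.
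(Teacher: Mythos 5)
The statement you are proving is the paper's \emph{Conjecture}, which the authors explicitly leave open: it asserts the implication for \emph{every} computable block function $f$, with no further hypotheses. Your proposal, however, is essentially a faithful reconstruction of the paper's proof of Theorem~\ref{theorem:non-c.e.}, which establishes the implication only for block functions satisfying the effectiveness condition~(\ref{effectiveness_condition}), i.e., computability of $cp_f$. You invoke that condition explicitly and essentially in your final paragraph: without computability of $cp_f$ you have no algorithm to delimit the $f$-block containing a given element (you cannot know when you have found the whole preimage $f^{-1}(x)$), so Step~(2) of the strategy --- effectively selecting the adjacent blocks $\tilde{\mathcal{C}}_m$ and $\tilde{\mathcal{D}}_m$ and copying them into $\mathcal{A}$ --- cannot be carried out. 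This is precisely the obstruction that makes the general conjecture open, and the paper's Theorem~\ref{theorem:another-example} shows the gap is not vacuous: there are computable block functions with non-computable $cp_f$, and for at least one of them the spectrum contains all $\Delta_2$ degrees, so the combinatorics of the unrestricted class are genuinely wilder than what your construction handles.

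So the verdict is: your argument is correct for the subclass covered by Theorem~\ref{theorem:non-c.e.} (and matches the paper's proof of that theorem closely, including the Cooper-flip diagonalisation, the semi-isomorphic embedding $\xi$ with condition~$(\ast)$, and the tree-of-strategies bookkeeping), but it does not prove the stated conjecture. To close the gap you would need either a way to run the block-selection and block-faithful-embedding machinery without an effective bound on $\mathrm{card}(f^{-1}(x))$, or an entirely different argument for block functions violating~(\ref{effectiveness_condition}); neither is supplied, and the paper offers none.
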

\bibliographystyle{plain}
\bibliography{ref}

\appendix

\section{Shapiro's results in computable structure theory}\label{sec:A}

In this section, we consider notations for plain $\omega$. Let $L$ be a computable language (also called computable signature).

\begin{definition}[cf. Definition \ref{dgsp1}]\label{dgsp2}
For a countably infinite $L$-structure $\mathcal{A}$, its \emph{degree spectrum} is the set of all Turing degrees $\mathbf{d}$ such that there is an $L$-structure $\mathcal{B}$ with the following properties:
\begin{enumerate*}
	\item $\mathcal{B}$ is isomorphic to $\mathcal{A}$,
	\item the domain of $\mathcal{B}$ equals $\omega$,
	\item the Turing degree of the atomic diagram of $\mathcal{B}$ is equal to $\mathbf{d}$ (here a formula from the atomic diagram is identified with its G{\"o}del number).
\end{enumerate*}
\end{definition}

\paragraph{\textbf{\textsc{Theorem~4 of~\cite{knight_degrees_1986} (Knight).}}}
	\emph{A structure $\mathcal{A}$ is called \emph{automorphically trivial} if there is a finite set $X \subseteq \mathrm{dom}(\mathcal{A})$ such that every permutation of $\mathrm{dom}(\mathcal{A})$ that fixes $X$ is an automorphism of $\mathcal{A}$. If a countably infinite structure $\mathcal{A}$ is not automorphically trivial, then its degree spectrum is closed upwards in Turing degrees.}
	
\paragraph{\textbf{\textsc{T1 and C1 in \cite{shapiro_acceptable_1982}.}}} \emph{A function $f: \omega \to \omega$ is computable in every notation if and only if either $f$ is almost constant (i.e., there is $c$ such that $f(x) = c$ holds for all but finitely many $x$), or $f$ is almost identity (i.e., $f(x) = x$ for all but finitely many $x$). A relation $R \subseteq \omega$ is computable in every notation if and only if either $R$ is finite, or $R$ is cofinite.}
\begin{proof}
We only consider the nontrivial direction $(\Rightarrow)$. Suppose that $f$ is computable in every notation. Fix some standard notation $\sigma_0$---e.g., the one induced by decimal numerals. Our function $f$ must be computable in $\sigma_0$. Consider the language $L = \{ F\}$, where $F$ is a unary functional symbol. We define an $L$-stru\-cture $\mathcal{S}_f$ as follows: the domain of $\mathcal{S}_f$ equals $\omega$, and the functional symbol $F$ is interpreted as our function $f$. Since $f(x)$ is computable in $\sigma_0$, it is easy to show that the structure $\mathcal{S}_f$ is computable.

    Now, if $f(x)$ is neither almost constant nor almost identity, then one can show that the corresponding structure $\mathcal{S}_f$ is not automorphically trivial. Thus, by the theorem of Knight, one can obtain an isomorphic copy $\mathcal{A}$ of $\mathcal{S}_f$ such that the atomic diagram $D(\mathcal{A})$ of the structure $\mathcal{A}$ is Turing equivalent to, say, the Halting problem. Now we define a notation $\sigma: S \to \omega$: 
	\begin{enumerate*}
		\item $S$ equals (the decimal representation of) $\omega$, and
		\item $\sigma$ is an arbitrary isomorphism from $\mathcal{A}$ onto $\mathcal{S}_f$.
	\end{enumerate*} 
	Since $D(\mathcal{A})$ is not computable, one can show that the function $f(x)$ is not computable in $\sigma$.
\end{proof}

To establish Shapiro's criterion of acceptability, we need the notion of computable categoricity which goes back to the papers by Mal'tsev~\cite{maltsev_constructive_1961,maltsev_recursive_1962}.

\begin{definition}
	A computable $L$-structure $\mathcal{A}$ is \emph{computably categorical} if for every computable $L$-structure $\mathcal{B}$, which is isomorphic to $\mathcal{A}$, there is a computable isomorphism $g$ from $\mathcal{B}$ onto $\mathcal{A}$ (i.e., $g$ is an isomorphism which is also a partial computable function).
\end{definition}

Mal'tsev (essentially Theorem~4.1.2 in~\cite{maltsev_constructive_1961}) showed that every finitely generated, computable algebraic structure is computably categorical.

\paragraph{\textbf{\textsc{Acceptance criterion \cite{shapiro_acceptable_1982}.}}}
	\emph{For any notation $\sigma$, the following are equivalent:}
	\begin{itemize}
		\item[(a)] \emph{For any function $f$, $f$ is computable in $\sigma$ if and only if $f$ is computable in the standard notation (e.g., the stroke notation).}
		\item[(b)] \emph{The successor function is computable in $\sigma$.}
	\end{itemize}
\begin{proof}
We sketch the proof for the direction (b)$\Rightarrow$(a). Let $Succ$ denote the successor function. Since the structure $(\omega, Succ)$ is one-generated, by the result of Mal'tsev, $(\omega,Succ)$ is computably categorical. Now consider a notation $\sigma: S \to \omega$ such that the successor function  is computable in $\sigma$. Let $Succ^\sigma$ be the image of the successor under $\sigma^{-1}$. We define a computable $L$-structure $\mathcal{T}_{S,\sigma}$ as follows.
\begin{itemize}
	\item The domain of $\mathcal{T}_{S,\sigma}$ equals $\omega$.
	\item Fix a computable bijection $\psi$ from $\omega$ onto $S$. The unary functional symbol $F$ is interpreted as follows: for $k\in\omega$,
	\[
		F(k) = \psi^{-1}( \mathrm{Succ}^{\sigma}(\psi(k)) ).
	\]
\end{itemize}
It is not hard to show that the structure $\mathcal{T}_{S,\sigma}$ is a computable isomorphic copy of $(\omega,Succ)$. Then the computable categoricity of $(\omega,Succ)$ allows to choose (the unique) computable isomorphism $g$ from $\mathcal{T}_{S,\sigma}$ onto $(\omega,Succ)$.

After that, one can use the computability of $g$ to easily show that the notation $\sigma$ satisfies the conditions from the item~(a).
\end{proof}

\section{Proof of Theorem~\ref{theorem:another-example}} \label{sect:second-theorem}

\begin{proof}[Sketch]
    As usual, for $i\in\omega$, $\varphi_i$ denotes the unary partial computable function which has G{\"o}del number $i$.

    We build a computable block function $f$ satisfying the following series of requirements:
    \begin{itemize}
        \item[$\mathcal{P}_i$:] The function $cp_f$ is not equal to $\varphi_i$.
        
        \item[$\mathcal{R}$:] Each $f$-block occurs infinitely often inside $(\omega,<,f)$.
    \end{itemize}
    
    At a stage $s$, we define the finite function $f\upharpoonright N_s$ (where $N_s\in\omega$ and $N_s < N_{s+1}$) in such a way that $(\{0,1,\dots,N_s-1\}, <, f\upharpoonright N_s)$ consists of blocks.
    
    Beforehand, we put $f(0) = 0$, $f(1) = 2$, and $f(2)=1$.
    
    The $\mathcal{R}$-strategy is a global one. The $\mathcal{R}$-requirement is satisfied in a simple way, as follows. At the end of each construction stage $s$, let $B_s$ be the current set of all (isomorphism types of) blocks. Then (before starting the stage $s+1$) we use fresh numbers $x$ (i.e., the least numbers such that $f(x)$ is not defined yet) to extend $f$ in the following way: for each block $I$ from $B_s$, we add \emph{precisely two} adjacent copies of $I$.
    
    Note that in addition to the $\mathcal{R}$-requirement, this procedure will guarantee that in the final structure $(\omega,<,f)$, there will be infinitely many pairs $(x,x+1)$ such that $f(x) = x$ and $f(x+1)=x+1$. This preliminary observation will help us in the verification.
    
    \textsc{Strategy for $\mathcal{P}_i$.} Suppose that the strategy starts working at a stage $s_0+1$.
    
    (1)\ Choose a large fresh size $l_i$ such that the cycle of size $l_i$ cannot be isomorphically embedded into the current structure $(\{0,1,\dots,N_{s_0}-1\}, <, f\upharpoonright N_{s_0})$. (See the proof of Theorem~\ref{theorem:non-c.e.} for the definition of a cycle).
    
    Extend $f$ by adding a copy of the cycle of size $l_i$. Let $w_i$ be the leftmost element of this copy.
    
    (2)\ Wait for a stage $s'>s_0+1$ such that $\varphi_{i,s'}(w_i)\!\downarrow\ =1$.
    
    (3)\ Extend $f$ by taking the least number $x$ such that $f(x)$ is still undefined, and setting $f(x) := w_i$.
    
    This concludes the description of the strategy. It is clear that it satisfies the requirement $\mathcal{R}_i$. Indeed, assume that the function $\varphi_i$ is total. If $\varphi_i(w_i) \neq 1$, then the element $w_i$ is a part of a cycle, and $cp_f(w_i) = 1 \neq \varphi_i(w_i)$. If $\varphi_i(w_i) = 1$, then we have $cp_f(w_i) = 2$.
    
    \textsc{Construction.} The construction is arranged as a standard finite injury argument. The requirements are ordered: $\mathcal{P}_0 < \mathcal{P}_1 < \mathcal{P}_2 < \dots$. When a higher priority strategy $\mathcal{P}_i$ acts (i.e., it extends $f$ in its Step~(3)), it initializes all lower priority strategies $\mathcal{P}_j$, $j>i$.
    
    \textsc{Verification.} The non-computability of the function $cp_f(x)$ can be proved by a standard argument for finite injury constructions.
    
    A more hard part is how to show that every $\Delta_2$ degree belongs to the degree spectrum of the constructed $f$. This can be achieved by arranging an argument similar to the argument of Case~(a) of Theorem~14 in~\cite{bazhenov_intrinsic_2022}. Roughly speaking, given an arbitrary $\Delta_2$ set $X$, we should encode it via a computable copy $\mathcal{A} = (\omega,<_A)$ as follows. For each $e\in\omega$, the numbers $4e$ and $4e+2$ will be $<_{A}$-adjacent, and more importantly: 
    \begin{itemize}
        \item if $e\in X$, then $f_{\mathcal{A}}(4e) = 4e+2$ and $f_{\mathcal{A}}(4e+2) = 4e$;
        
        \item if $e\not\in X$, then $f_{\mathcal{A}}(4e) = 4e$ and $f_{\mathcal{A}}(4e+2) = 4e+2$.
    \end{itemize}
    Our preliminary observation helps us to arrange this encoding. This concludes the proof of Theorem~\ref{theorem:another-example}.

\end{proof}

\end{document}